\newtheorem{theorem}{Theorem}[section]
\newtheorem{lemma}[theorem]{Lemma}
\newtheorem{proposition}[theorem]{Proposition}
\theoremstyle{remark}
\newtheorem{question}[theorem]{Question}
\newcommand{\C}{\mathbb C}
\newcommand{\bC}{\mathbb C}
\newcommand{\bP}{\mathbb P}
\newcommand \re{\text{Re}\,}
\newcommand \im{\text{Im}\,}
\DeclareMathOperator{\esstype}{ess\, type}
\DeclareMathOperator{\mult}{mult}
\author{Peter Ebenfelt}
\address{Department of Mathematics, University of California at San Diego, La Jolla, CA 92093-0112}
\email{pebenfel@math.ucsd.edu}
\author{Duong Son}
\address{Department of Mathematics, University of California at San Diego, La Jolla, CA 92093-0112}
\email{snduong@math.ucsd.edu}
\thanks{The authors were supported in part by
DMS-1001322. The second author also acknowledges a scholarship from the Vietnam
Education Foundation.}
\thanks{2000 {\em Mathematics Subject Classification}. 32H02, 32V30}
\begin{document}

\begin{abstract} We show that there are strictly pseudoconvex, real algebraic hypersurfaces in $\bC^{n+1}$ that cannot be locally embedded into a sphere in $\bC^{N+1}$ for any~$N$. In fact, we show that there are strictly pseudoconvex, real algebraic hypersurfaces in $\bC^{n+1}$ that cannot be locally embedded into any compact, strictly pseudoconvex, real algebraic hypersurface.
\end{abstract}
\title[On the existence of holomorphic embeddings into spheres]
{On the existence of holomorphic embeddings of strictly pseudoconvex algebraic hypersurfaces into spheres}
\maketitle

\section{Introduction}

In 1978, Webster \cite{Webster78b} proved that any real algebraic, Levi nondegenerate hypersurface $M\subset \bC^{n+1}$ admits a holomorphic mapping sending $M$ into a nondegenerate hyperquadric $Q\subset \bC^{N+1}$, for some $N\geq n$, such that the mapping is transversal to $Q$ and a local embedding near every point on $M$. In Webster's construction, however, the target hyperquadric $Q$ has in general different Levi signature from that of $M$. A few years later, Forstneric \cite{Forstneric86} and, independently, Faran \cite{Faran88} proved that there are strictly pseudoconvex (Levi signature 0), real {\it analytic} hypersurfaces in $\bC^{n+1}$ (indeed, in some sense a dense set of such for every $n$) that do not admit local holomorphic embeddings into a sphere $S\subset \bC^{N+1}$ (i.e.\ a hyperquadric with the same Levi signature) for {\it any} $N$. This leads naturally to the question (posed explicitly in \cite{Forstneric86}) whether there are strictly pseudoconvex, real {\it algebraic} hypersurfaces in $\bC^{n+1}$ that also do not admit local holomorphic embeddings into spheres of any dimension or if every such hypersurface can indeed be locally embedded into a sphere in a sufficiently high dimensional space; the proofs in \cite{Forstneric86} and \cite{Faran88} are highly nonconstructive (using the Baire Category Theorem) and do not offer any criteria for which hypersurfaces are embeddable. In the recent paper \cite{HZ09}, Huang and Zhang show that there are Levi nondegenerate, real algebraic hypersurfaces of Levi signature $l$ that cannot be locally embedded into a hyperquadric of the same signature, provided that the signature $l$ is strictly positive. The main idea in \cite{HZ09} is to establish a monotonicity property for the pseudoconformal curvature along null directions for the Levi form, an approach that cannot work when the hypersurfaces are strictly pseudoconvex. In this paper, we shall settle the strictly pseudoconvex case and show that there are strictly pseudoconvex, real algebraic hypersurfaces in $\bC^{n+1}$ that do not admit local holomorphic embeddings into a sphere $S\subset \bC^{N+1}$ for any $N$. In fact, we shall prove the stronger statement that there are strictly pseudoconvex, real algebraic hypersurfaces that do not admit nonconstant holomorphic mappings into any strictly pseudoconvex, {\it compact}, real algebraic hypersurface. We should point out that any local holomorphic mapping sending a real algebraic hypersurface into a strictly pseudoconvex, real algebraic hypersurface is necessarily algebraic \cite{Z99} and thus extends, in a sense, as a ``global'' mapping, albeit potentially multi-valued with singularities.

Other recent papers that have investigated the existence of holomorphic embeddings into hyperquadrics include \cite{KimOh09}, \cite{Oh07}, and \cite{Zaitsev08}. Zaitsev (in \cite{Zaitsev08}) studied obstructions to embeddability into hyperquadrics of any signature and dimension, and provided explicit examples of real analytic, Levi nondegenerate hypersurfaces that are not embeddable into hyperquadric of any dimension. In view of Webster's result mentioned above, the hypersurfaces in Zaitsev's examples are not real-algebraic. In fact, one can not make those hypersurfaces real algebraic by any changes of holomorphic coordinates (the first example of such a hypersurface was constructed by the first author in \cite{E96}; Forstneric \cite{Forstneric04} later showed that this is true for ``most" real analytic hypersurfaces). We also mention here the work of S.-Y. Kim and J.-W.~Oh concerning the existence of local embeddings into spheres using differential systems \cite{KimOh09}. The necessary and sufficient conditions for embeddability obtained in \cite{KimOh09} are very complicated relations between various pseudoconformal invariants, making them practically impossible to verify in specific examples, in particular in the examples given in this paper. The corresponding ``uniqueness'' or classification problem for mappings into hyperquadrics has also been extensively studied. We mention here only a few papers in this direction: \cite{Webster79,Faran82,Faran86,HuangJi01,EHZ04,EHZ05,BEH08,BEH09,ESh10,JPDLebl11} and refer the reader to these papers for further discussion and other references.
Finally we would like to mention that the existence of embeddings into spheres can be formulated as a ``sums of squares'' or positivity problem. The reader is referred to D'Angelo's paper \cite{JPD11} for details and discussion.

Recall that a smooth (meaning $C^\infty$ here) hypersurface $M\subset \bC^{n+1}$ is said to be {\it real algebraic} if it is contained in the zero locus of a non-trivial, real-valued polynomial. For the remainder of this paper, a real hypersurface refers to a connected, smooth manifold of real codimension one in a complex space (or manifold). Our first main result states that there are closed, strictly pseudoconvex, real algebraic hypersurfaces in $\bC^{n+1}$ that do not admit nonconstant holomorphic mappings into a compact, strictly pseudoconvex, real algebraic hypersurface (in particular, a sphere) in $\bC^{N+1}$ for any $N$. More precisely, we have:

\begin{theorem}\label{Main0} For any positive integer $n$, there exist closed, strictly pseudoconvex, real algebraic hypersurfaces $M$ in $\bC^{n+1}$ such that if $p\in M$ and $H$ is a local holomorphic mapping in an open neighborhood of $p$ sending $M$ into a compact, strictly pseudoconvex, real algebraic hypersurface in $\bC^{N+1}$, for some $N\geq n$, then $H$ is constant. \end{theorem}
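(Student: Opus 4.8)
The plan is to argue by contradiction: assume some $p \in M$ carries a nonconstant holomorphic map $H$, defined on an open neighborhood $U \subset \bC^{n+1}$ of $p$, with $H(M \cap U) \subset M'$ for a compact, strictly pseudoconvex, real algebraic $M' \subset \bC^{N+1}$, and then rule this out by a careful choice of $M$. The first step is to globalize $H$. By the algebraicity statement of Zaitsev recalled in the introduction, a nonconstant such $H$ is algebraic, so its graph lies in an irreducible algebraic variety and $H$ continues as a (generally multivalued, branched) algebraic correspondence over all of $\bC^{n+1}$, single-valued and holomorphic away from a proper algebraic subvariety $Z$, with one branch equal to the given $H$ on $U$. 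Since $M'$ bounds a bounded strictly pseudoconvex domain $\Omega'$ and $H(M\cap U)\subset \partial\Omega'$, the strictly pseudoconvex side of $M$ near $p$ is mapped into $\overline{\Omega'}$; in particular this branch of $H$ is bounded on an open set, so the only singularities it can acquire along $Z$ are branch points, not poles.

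The second step is to extract the rigidity imposed by the compact target through the intertwining of Segre varieties. Writing $M=\{\rho=0\}$ and $M'=\{\rho'=0\}$ with $\rho,\rho'$ real polynomials, the inclusion $H(M)\subset M'$ complexifies to $H(\Sigma_q)\subset \Sigma'_{H(q)}$ for $q$ near $p$, where $\Sigma_q$ and $\Sigma'_{q'}$ are the Segre varieties. Because $M'$ is compact and strictly pseudoconvex, its Segre family is rigid: the Segre map $q'\mapsto \Sigma'_{q'}$ is finite near $M'$, the $\Sigma'_{q'}$ are algebraic of uniformly bounded degree, and, crucially, this bound is governed by the target alone. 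Continuing $H$ along $M$ via the intertwining relation therefore visits only finitely many Segre varieties of $M'$, so the global correspondence obtained in Step 1, restricted to the sheet containing $H$, has only finitely many branches over a neighborhood of $\overline{\Omega}$ — a finiteness estimate that does not depend on $N$.

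The third and decisive step is to construct $M$ whose intrinsic Segre dynamics defeat this finiteness. The idea is to choose a real polynomial $\rho$ making $M$ closed and strictly pseudoconvex — for instance a controlled algebraic perturbation of a sphere, keeping the Levi form positive and $M$ compact — but arranged so that continuing the bounded branch of $H$ around a prescribed loop in the complement of the branch locus of the Segre map of $M$ necessarily produces a new, distinct branch. Iterating the loop would then force infinitely many branches, contradicting the uniform finiteness of Step 2, unless $H$ is constant. Equivalently, one designs $M$ so that the intertwining forces any nonconstant algebraic $H$ to develop a pole or an interior branch point inside $\overline{\Omega}$, which is incompatible with $H$ being holomorphic and bounded up to the strictly pseudoconvex boundary $M$.

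The main obstacle will be the third step: exhibiting explicit, closed, strictly pseudoconvex, real algebraic $M$ for which one can \emph{prove} that the forced monodromy (equivalently, the forced interior singularity) is nontrivial for \emph{every} nonconstant candidate $H$ and \emph{every} compact strictly pseudoconvex algebraic target of \emph{every} dimension $N$. This demands a computable invariant of the global Segre geometry of $M$ that simultaneously obstructs nonconstant $H$ and can be certified nonzero from the defining polynomial, with all estimates uniform in $N$; the verification that the constructed $M$ is closed, real algebraic, and strictly pseudoconvex is then a routine but careful computation.
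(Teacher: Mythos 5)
Your proposal is an outline with the decisive step missing, and you say so yourself: Step 3 --- producing a closed, strictly pseudoconvex, real algebraic $M$ together with a \emph{proof} that its ``Segre dynamics'' obstruct every nonconstant $H$ into every compact, strictly pseudoconvex, algebraic target of every dimension $N$ --- is exactly the content of the theorem, and no construction, no invariant, and no mechanism is supplied. Worse, the contradiction you aim for cannot arise the way you set it up: once Zaitsev's theorem makes $H$ algebraic (your Step 1), $H$ automatically has only finitely many branches; the ``uniform finiteness'' you extract in Step 2 from compactness of the target is therefore not an extra constraint but a tautology, so ``iterating a loop to force infinitely many branches'' would contradict algebraicity itself rather than single out a contradiction attributable to your (unspecified) choice of $M$. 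What is really needed --- and what your sketch never identifies --- is an obstruction that is \emph{uniform in $N$}; invariant-theoretic or degree-counting obstructions typically weaken as $N$ grows, which is precisely why the question was open.

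The paper's route is quite different and worth internalizing, because its dimension-uniform mechanism is local and Levi-geometric rather than monodromy-theoretic. One first builds a \emph{compact} real algebraic $M_0\subset\bC^{n+1}$, via the explicit polynomial $P_R(z,w,\bar z,\bar w)=R\bigl(\sum_k|z_k|^2+|w|^2\bigr)^2+2\sum_k\re(z_k\bar z_k^3)$, which is strictly pseudoconvex except at the origin, where it is $3$-nondegenerate. The obstruction theorem (Theorem \ref{cor}) then says: if a minimal real analytic hypersurface has even one point $p$ of $k$-nondegeneracy with $k\geq 2$, every local holomorphic map from \emph{any} point of it into a compact, strictly pseudoconvex, real algebraic $M'$ is constant. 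The proof propagates a putative nonconstant germ along CR curves to $p$ using Shafikov--Verma continuation, gets transversality at $H(p)$ from a Hopf-lemma argument (using minimality plus plurisubharmonicity of the target's defining function), concludes via essential finiteness that $\mult_p H\leq\esstype_p M=1$, i.e.\ $H$ is a local \emph{embedding} at $p$, and then pulls back the strictly plurisubharmonic defining function of $M'$ to force strict pseudoconvexity of $M$ at $p$ --- contradicting $k\geq 2$. Note that none of this depends on $N$: that is the uniformity your approach lacks. Finally, Theorem \ref{Main0} follows by applying the projective automorphism $\hat z_j=z_j/w$, $\hat w=1/w$, which sends the single degenerate point of $M_0$ to infinity, leaving a closed, everywhere strictly pseudoconvex algebraic hypersurface $\hat M$ to which any counterexample map would transport back to $M_0$ and be killed by Theorem \ref{cor}. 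Your plan, by contrast, would have you verify a global monodromy property against all targets simultaneously, with no candidate $M$ and no computable invariant; as it stands, it is not a proof.
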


As mentioned above, this result resolves, in particular, the question discussed in the first paragraph, and posed explicitly in \cite{Forstneric86} and \cite{HZ09} (Question 3.8), regarding the existence of local embeddings of strictly pseudoconvex, real algebraic hypersurfaces into spheres.

It is obvious that the source hypersurfaces whose existence is asserted in Theorem~\ref{Main0} cannot be compact (since the identity mapping would then be a counterexample). However, our second main result states that there are compact, real algebraic hypersurfaces in $\bC^{n+1}$ that are strictly pseudoconvex {\it except at one point} and that do not admit nonconstant holomorphic mappings into a compact, strictly pseudoconvex, real algebraic hypersurface (in particular, a sphere) in $\bC^{N+1}$ for any $N$:

\begin{theorem}\label{Main1} For any positive integer $n$, there exist compact, real algebraic hypersurfaces $M$ in $\bC^{n+1}$ that are strictly pseudoconvex except at one point such that if $p\in M$ and $H$ is a local holomorphic mapping in an open neighborhood of $p$ sending $M$ into a compact, strictly pseudoconvex, real algebraic hypersurface in $\bC^{N+1}$, for some $N\geq n$, then $H$ is constant. \end{theorem}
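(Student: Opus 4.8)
\section*{Proof proposal}

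The plan is to deduce Theorem~\ref{Main1} from Theorem~\ref{Main0} by means of a one-point compactification. Let $M_0 \subset \bC^{n+1}$ be one of the closed, strictly pseudoconvex, real algebraic hypersurfaces provided by Theorem~\ref{Main0}; as already noted, $M_0$ is necessarily unbounded. I would compactify $M_0$ by a birational transformation $\Phi$ of $\bC^{n+1}$ of Cayley type, chosen so that $\Phi$ is a biholomorphism in a neighborhood of $M_0$ and so that the Zariski closure $M := \overline{\Phi(M_0)}$ is obtained from $\Phi(M_0)$ by adjoining a single point $p_\infty$, the image of the end of $M_0$ ``at infinity.'' Since $\Phi$ is rational, $M$ is again real algebraic; and since $\Phi(M_0) = M \setminus \{p_\infty\}$ is bounded while $M$ is closed, $M$ is compact. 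Away from $p_\infty$ the map $\Phi$ is a local biholomorphism, and strict pseudoconvexity is a biholomorphic invariant, so $M \setminus \{p_\infty\}$ is a smooth, strictly pseudoconvex hypersurface.

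It remains to analyze $M$ at $p_\infty$, and this is where I expect the main obstacle to lie. Two facts must be established there: that $M$ is still a smooth hypersurface at $p_\infty$, and that its Levi form degenerates at that point. The degeneration is in fact forced: were $M$ smooth and strictly pseudoconvex at every point, then $M$ would be a compact, strictly pseudoconvex, real algebraic hypersurface possessing the constancy property, and the identity map $M \to M$ would contradict that property. The genuine work, therefore, is to arrange the behavior of $M_0$ at infinity—equivalently, to select the Theorem~\ref{Main0} examples from a family adapted to $\Phi$—so that $p_\infty$ is a \emph{smooth} point at which the Levi form merely degenerates, rather than a singular point or a degeneracy locus of positive dimension. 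I would carry this out by writing $M_0$ near infinity as a controlled algebraic perturbation of the model hypersurface that $\Phi$ compactifies, expanding $\Phi$ in affine coordinates centered at $p_\infty$, and verifying directly that the transformed defining function is smooth, has nonvanishing differential, and has positive semidefinite Levi form with its single degeneration at $p_\infty$.

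Finally I would transfer the constancy property. Suppose $p \in M$ and $H$ is holomorphic on a connected neighborhood $U$ of $p$ with $H(M \cap U) \subset S$ for some compact, strictly pseudoconvex, real algebraic hypersurface $S \subset \bC^{N+1}$. Choose any point $q \in (M \cap U)\setminus\{p_\infty\}$; such a $q$ exists because $p_\infty$ is not isolated in $M$. On a small neighborhood $U_q \subset U$ of $q$ the inverse $\Phi^{-1}$ is a biholomorphism onto a neighborhood $V$ of $\Phi^{-1}(q) \in M_0$, so $H \circ \Phi$ is a local holomorphic map sending $M_0 \cap V$ into $S$. By Theorem~\ref{Main0}, applied at $\Phi^{-1}(q)$, the map $H \circ \Phi$ is constant on $V$, and hence $H$ is constant on the open set $U_q = \Phi(V)$. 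Since $U$ is connected, the identity theorem forces $H$ to be constant on all of $U$. As this holds for every $p \in M$—including $p = p_\infty$, which is covered by the same argument via a nearby point $q \neq p_\infty$—the proof is complete. The crux, as indicated, is the local geometric analysis at the compactification point $p_\infty$.
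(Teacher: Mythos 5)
Your proposal runs the paper's logic backwards, and this creates two fatal problems. First, circularity: within this paper Theorem~\ref{Main0} is not an independent result --- it is \emph{deduced from} Theorem~\ref{Main1}, by taking the compact example $M$ of Theorem~\ref{Main1} (which is weakly pseudoconvex at exactly one point), applying the projective transformation $\hat z_j = z_j/w$, $\hat w = 1/w$ to push that bad point to infinity, and then pulling local maps of $\hat M$ back to $M$. So a proof of Theorem~\ref{Main1} that invokes Theorem~\ref{Main0} as a black box proves nothing unless you supply an independent proof of Theorem~\ref{Main0}; no such proof exists in the paper, and producing one is exactly as hard as the problem you were asked to solve.

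Second, even if Theorem~\ref{Main0} were granted, its bare existence statement gives you no control whatsoever over the geometry of $M_0$ at infinity, which is precisely where you admit ``the genuine work'' lies. For an arbitrary unbounded hypersurface with the constancy property, the closure of $\Phi(M_0)$ need not be a smooth manifold at $p_\infty$, need not meet the added locus in a single point, and need not be a hypersurface there at all; your maximum-principle argument (identity map gives a contradiction) only shows that \emph{if} $M$ is everywhere a smooth compact strictly pseudoconvex algebraic hypersurface a contradiction arises --- it cannot rule out that $M$ is singular at $p_\infty$ instead of weakly pseudoconvex there. Resolving this requires an explicit example, and that is the actual content of the paper: one constructs $M = \{\im w = P(z,w,\bar z,\bar w)\}$ with
\begin{equation*}
P(z,w,\bar z,\bar w) = R\left(\sum_{k=1}^n|z_k|^2+|w|^2\right)^2 + 2\sum_{k=1}^n\re\bigl(z_k\bar z_k^3\bigr),\quad R \gg 0,
\end{equation*}
checks directly that it is compact, smooth, strictly pseudoconvex away from $0$, and $3$-nondegenerate at $0$, and then invokes Theorem~\ref{cor} (the Shafikov--Verma continuation plus the finite-nondegeneracy/transversality analysis) to conclude that any local map into a compact, strictly pseudoconvex, real algebraic target is constant. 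Your proposal uses neither the explicit construction nor Theorem~\ref{cor}, so the obstruction mechanism that makes the theorem true never enters your argument; what remains is a reduction of the statement to itself.
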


We should point out that the existence of weakly pseudoconvex points alone is not an obstruction to having holomorphic embeddings into spheres as examples readily show. For instance, the complex ellipsoid in $\bC^2$ defined by
\begin{equation}
|z|^{2p}+|w|^{2q}=1,\quad p,q\in \mathbb Z_+,\ p,q\geq 2
\end{equation}
is weakly pseudoconvex along $z=0$ and $w=0$, but is nevertheless mapped into the unit sphere in $\bC^2$ by the mapping $H(z,w)=(z^p,w^q)$, which is locally biholomorphic near every strictly pseudoconvex point.
Nevertheless, the idea in the proof of Theorem \ref{Main1} is to construct hypersurfaces with weakly pseudoconvex points of a particular type (see Section \ref{Example}) and then show that the existence of such points serve as global obstructions to embeddability into compact strictly pseudoconvex, real algebraic hypersurfaces (see Theorem \ref{cor}).  The proof of Theorem \ref{Main0} is then obtained by applying an automorphism of the projective space $\bP^{n+1}$ to move the weakly pseudoconvex point off to infinity.

In order to state our result concerning the obstruction to embeddability, we need to introduce a couple of basic concepts in CR geometry, namely those of finite nondegeneracy and minimality. The reader is referred to \cite{BER99a} for further elaboration on these and other concepts in CR geometry. Recall that a real hypersurface $M\subset \bC^{n+1}$ is said to be {\it minimal} at a point $p\in M$ if $M$ does not contain a complex hypersurface through $p$. If $M$ is real-analytic, then minimality is equivalent to  finite (commutator) type in the sense of Kohn and Bloom--Graham (see \cite{BER99a} for the precise definition of the latter notion). The hypersurface $M$ is said to be {\it finitely nondegenerate} at $p\in M$ if the following (infinite) collection of vectors spans $\bC^{n+1}$:
\begin{equation}\label{fnondeg}
(L^I\rho_Z)(p,\bar p),\quad I=(I_1,\ldots,I_n)\in \mathbb Z_+,
\end{equation}
where $L_1,\ldots,L_n$ form a basis for the CR vector fields on $M$ near $p$, we use multi-index notation $L^I:=L_1^{I_1}\ldots,L_n^{I_n}$, $\rho(Z,\bar Z)$ is a defining function for $M$ near $p$, and
\begin{equation}
\rho_Z:=\left(\frac{\partial\rho}{\partial Z_1},\ldots,\frac{\partial\rho}{\partial Z_{n+1}}\right).
\end{equation}
If $M$ is finitely nondegenerate at $p\in M$, then a finite subcollection of the vectors in \eqref{fnondeg} spans $\bC^{n+1}$. We say that $M$ is {\it $k$-nondegenerate} at $p$ if $k$ is the smallest integer such that the subcollection of vectors in \eqref{fnondeg} with $|I|:=I_1+\ldots+I_n\leq k$ spans. It is not difficult to check that $M$ is Levi nondegenerate at $p$ if and only if $M$ is $1$-nondegenerate at $p$. The notion of finite nondegeneracy and the integer $k$ are biholomorphic invariants and do not depend on the choice of $L_1,\ldots, L_n$ or $\rho$ (see \cite{BER99a}). We may now state our final main result in this paper, asserting that a minimal, real analytic hypersurface $M$ that contains a point $p\in M$ at which $M$ is $k$-nondegenerate with $k\geq 2$ (in particular, $M$ is not Levi nondegenerate at $p$) can only be mapped into a compact, strictly pseudoconvex, real algebraic hypersurface in the trivial way.

\begin{theorem}\label{cor}
Let $M$ be a real analytic hypersurface in $\bC^{n+1}$ which is minimal
at all points and assume that there is a point $p\in M$ at which $M$ is $k$-nondegenerate with $k\geq 2$. Suppose $q$ is an arbitrary point on $M$. If a local holomorphic mapping $H$ in an open neighborhood of~$q$ sends $M$ into a compact, strictly pseudoconvex, real algebraic hypersurface in $\bC^{N+1}$, for some $N\geq n$, then $H$ is constant.
\end{theorem}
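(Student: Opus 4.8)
The plan is to argue by contradiction: assuming $H$ is nonconstant, I would propagate it along $M$ to a neighborhood of the distinguished point $p$ and then exhibit an incompatibility between the $k$-nondegeneracy ($k\ge 2$) of $M$ at $p$ and the Levi nondegeneracy of the strictly pseudoconvex target. Two structural facts come first. Since the target hypersurface $M'\subset\bC^{N+1}$ is strictly pseudoconvex, it is Levi nondegenerate, hence $1$-nondegenerate at each of its points. Moreover, by the algebraicity result cited above (\cite{Z99}), the germ of $H$ at $q$ is the germ of an algebraic map; in particular it continues, as a possibly branched, multivalued algebraic map, along paths in $\bC^{n+1}$ away from a proper complex-analytic branch locus.

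The central step is propagation. Let $\mathcal{M}\subset\bC^{n+1}\times\bC^{n+1}$ denote the complexification of $M$, a connected complex manifold since $M$ is connected and real analytic. Writing $\rho(Z,\zeta)$ and $\rho'(Z',\zeta')$ for the complexified defining functions and $\bar H$ for the conjugate map, the relation $\rho'\bigl(H(Z),\bar H(\zeta)\bigr)=0$ holds on the germ of $\mathcal{M}$ at $(q,\bar q)$. Using the algebraicity of $H$ together with the minimality of $M$ at \emph{every} point, I would continue a branch of $H$ along $\mathcal{M}$ from $(q,\bar q)$ to $(p,\bar p)$; by uniqueness of analytic continuation the defining relation persists, so the continued germ $\tilde H$ still sends $M$ into $M'$ and remains nonconstant. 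This is precisely where minimality at all points is used: it is the hypothesis guaranteeing that the holomorphic continuation does not break down anywhere along $M$, so that a genuine holomorphic germ of $\tilde H$ is available near $p$ (the branch locus being a proper complex-analytic subset, this requires some care should $p$ happen to lie on it).

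Finally I would localize at $p$. Since $M$ is minimal at $p$, $M'$ is Levi nondegenerate, and $\tilde H$ is nonconstant — and does not map $M$ into a complex subvariety of $M'$, as a Levi nondegenerate $M'$ contains none — the transversality theorem for holomorphic mappings between real hypersurfaces (cf.\ \cite{BER99a}) yields that $\tilde H$ is CR transversal to $M'$ at $p$. I would then run the chain-rule computation on the identity $\rho'\circ\tilde H=\lambda\,\rho$, where $\lambda(p,\bar p)\neq 0$ by transversality: differentiating along the CR vector fields of $M$ and using $N\ge n$, one expresses the nondegeneracy vectors of $M$ at $p$ in terms of the pullback under $\tilde H$ of those of $M'$ at $\tilde H(p)$. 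Because $M'$ is $1$-nondegenerate, this forces $M$ to be $1$-nondegenerate at $p$, i.e.\ $k=1$, contradicting $k\ge 2$. Hence $H$ must be constant.

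The main obstacle is the propagation step: making rigorous that the algebraic continuation of $H$ reaches a genuine holomorphic germ at $p$ while preserving both the mapping property $\tilde H(M)\subset M'$ and nonconstancy, in particular controlling the branch locus and invoking minimality at every point of $M$ to prevent the continuation from degenerating. The transversality input and the nondegeneracy pullback are then comparatively standard.
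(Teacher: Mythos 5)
Your proposal follows the paper's broad outline (propagate $H$ from $q$ to the degenerate point $p$, prove transversality there, derive a contradiction with $k\ge 2$), but two of its steps have genuine gaps, and the second one is an outright false implication.

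First, the propagation step rests on Zaitsev's algebraicity theorem, but that theorem requires the \emph{source} hypersurface to be real algebraic, whereas in Theorem~\ref{cor} the source $M$ is only real analytic (and minimal). So $H$ need not be algebraic, and the entire picture of a multivalued algebraic continuation off a proper branch locus is unavailable from the start. Even if $M$ were algebraic, the issues you defer as needing ``care'' --- single-valuedness of the continued germ, the possibility that $p$ lies on the branch locus, persistence of the mapping relation --- are precisely the hard content of the step. The paper handles all of this by citing the Shafikov--Verma extension theorem (Theorem~\ref{shafikov:verma}), whose hypotheses (real analytic minimal source, \emph{compact} strictly pseudoconvex real algebraic target) match Theorem~\ref{cor} exactly; note that compactness of $M'$, which your sketch never uses, is essential there.

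Second, the endgame is incorrect. You claim that transversality of $\tilde H$ at $p$ together with $1$-nondegeneracy of $M'$ forces $M$ to be $1$-nondegenerate at $p$. This is false, and the paper's own example disproves it: the ellipsoid $|z|^{2p}+|w|^{2q}=1$ with $p,q\ge 2$ is mapped into the unit sphere by $(z,w)\mapsto(z^p,w^q)$; at a point $(0,w_0)$ this map is transversal to the sphere (by Lemma~\ref{complexhopflemma}, the ellipsoid being minimal and the map not sending any open set into the sphere), the sphere is $1$-nondegenerate, and yet the ellipsoid is weakly pseudoconvex --- indeed not even finitely nondegenerate --- at $(0,w_0)$. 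The reason your chain-rule computation cannot deliver the claim is that differentiating $\rho'\circ\tilde H=\lambda\rho$ along CR vector fields only shows that the vectors $(L^I\rho_Z)(p)$ of the source lie in the image, under the transpose of $d\tilde H(p)$, of the span of the target's nondegeneracy vectors; the inclusion goes the wrong way and is vacuous unless one already knows $\rank d\tilde H(p)=n+1$. What is needed instead is the paper's Proposition~\ref{multid}: transversality plus essential finiteness of $M$ at $p$ (with $\esstype_pM=1$, equivalent to finite nondegeneracy) yields, via the ideal inclusion $\mathcal{I}_M\subset\mathcal{I}(F)$, that $\tilde H$ is a \emph{finite map of multiplicity one}, hence a local embedding. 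Only then can the Levi form be pulled back (Lemma~\ref{leviidentity}), and that pullback uses the positive \emph{definiteness} of the Levi form of $M'$ (strict pseudoconvexity), not mere $1$-nondegeneracy --- a nondegenerate form can restrict degenerately to a proper subspace. The conclusion is that $M$ is strictly pseudoconvex at $p$, contradicting $k\ge 2$. In short, the $k$-nondegeneracy at $p$ is an input that forces $\tilde H$ to be an embedding; it is not a property one can pull back from the target.
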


The proof of this result has two main ingredients. The first is a result by Shafikov and Verma
\cite{ShafikovVerma07} concerning holomorphic continuation of a local holomorphic mapping into a compact, strictly pseudoconvex, real algebraic hypersurface along CR curves. The second is an analysis,  in the spirit of \cite{BR88} and \cite{ER06}, of the geometry of a holomorphic mapping near a finitely nondegenerate point.

\section{Preliminaries}
In this section, we recall some basic notions and fix some notation.
We refer to \cite{BER99a} for more detail and proofs of the statements made below.
Let $M\subset \bC^{n+1}$ be a smooth real-analytic hypersurface and $p\in M$.
There is a local holomorphic coordinate system $(z,w)$ vanishing at $p$
such that $M$ is given by
\[
\im w = \langle z,\bar z\rangle + F(z,\bar z, \re w),
\]
where $F$ vanishes to order $3$. The hypersurface $M$ is said to be {\it Levi-nondegenerate} at $p$ if the
quadratic form $\langle z,\bar z \rangle $ is nondegenerate and {\it strictly pseudoconvex} if the form is
positive definite. The coordinate system $(z,w)$ can be chosen such that $F(z,0,s)=F(0,\chi,s)\equiv 0$ (normal coordinates). By replacing $\im w = \frac1{2i}(w-\bar w)$ and $\re w = \frac12(w+\bar w)$
and solving for $w$ we obtain a defining equation for $M$ (in normal coordinates)
of the form:
\begin{equation}\label{complexdef}
w = Q(z,\bar z, \bar w),
\end{equation}
where $Q$ satisfies $Q(0,\bar z, \bar w) \equiv Q(z,0,\bar w) \equiv \bar w$ and
the reality condition
\begin{equation}
Q(z,\bar z, \bar Q(\bar z, z,w )) \equiv w.
\end{equation}
It is convenient to use the complex defining equation \eqref{complexdef}
to define the notions of essential finiteness and essential type
as follows. We replace $\bar z, \bar w$ by independent variables $\chi, \tau$
and write
\[
Q(z,\chi, 0) = \sum_{I\in \mathbb{N}^n} q_{I}(z)\chi^I.
\]
Let $\mathcal{I}_M$ be the ideal in $\bC[[z]]$ generated by $\{q_I(z)\}_{I}$.
Following Baouendi, Jacobowitz and Treves \cite{BJT85}, we shall say that $M$ is {\it essentially finite} at $p$ if $\mathcal{I}_M$ is of finite codimension
in $\bC[[z]]$. The dimension $\dim_{\bC} \bC[[z]]/{\mathcal{I}_M}$ is a biholomorphic
invariant of $M$ and is called the {\it essential type} of $M$ at $p$, denoted
by $\esstype_pM$.

Finite nondegeneracy, as defined in the introduction, at $(0,0)$ in normal coordinates reduces to the statement
that the following collection of vectors spans $\bC^n$:
\begin{equation}\label{fnondeg2}
Q_{z\chi^I}(0,0,0),\quad I=(I_1,\ldots, I_n)\in \mathbb Z_+,
\end{equation}
and $M$ is $k$-nondegenerate at $p=(0,0)$ if $k$ is the smallest integer such that the collection of vectors in \eqref{fnondeg2} with $|I|\leq k$ spans. To conclude this section, we mention that finite nondegeneracy of a real-analytic hypersurface
$M$ at $p\in M$ is equivalent to the statement that $M$ is essentially finite at $p$
and $\esstype_pM =1$ (see \cite[Proposition~11.8.27]{BER99a}).

\section{Some Lemmas and proof of Theorem \ref{cor}}
We will need several lemmas to prove Theorem \ref{cor}. First, we shall prove
the following Hopf Lemma type result:

\begin{lemma}\label{complexhopflemma}
Let $M, M'$ be real hypersurfaces through $p, p'$ in $\bC^{n+1}$ and
$\bC^{N+1}$ respectively. Assume that $M$ is minimal at $p$ and $M'$
has a smooth plurisubharmonic defining function near $p'$ (e.g.\ $M'$ strictly pseudoconvex at $p'$). Suppose that a germ of holomorphic mapping
$H:(\bC^{n+1},p) \to (\bC^{N+1},p')$ sends $M$ into $M'$. If $H$ does
not send an open neighborhood of $p$ in $\bC^{n+1}$ into $M'$ then $H$ is transversal to $M'$ at $p'=H(p)$.
\end{lemma}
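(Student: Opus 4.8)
The plan is to reduce the transversality assertion to a statement about a single plurisubharmonic function and then to exploit the strong maximum principle together with the minimality of $M$. Let $\rho'$ be a smooth plurisubharmonic defining function for $M'$ near $p'$, normalized so that $\{\rho'\le 0\}$ is the pseudoconvex side, and set $u:=\rho'\circ H$. Since $H$ is holomorphic and $\rho'$ is plurisubharmonic, $u$ is a smooth plurisubharmonic function near $p$; since $H(M)\subset M'$ we have $u|_M\equiv 0$; and $u\not\equiv 0$ precisely because $H$ does not map a neighborhood of $p$ into $M'$. By the chain rule $\partial u(p)=\partial\rho'(p')\,H_Z(p)$, and the transversality of $H$ to $M'$ at $p'$ is equivalent to the condition $\partial\rho'(p')\,H_Z(p)\neq 0$, i.e.\ $\partial u(p)\neq 0$; as $u$ is real-valued this is the same as $du(p)\neq 0$. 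Thus it suffices to prove: if $u$ is plurisubharmonic near $p$, $u|_M\equiv 0$, $M$ is minimal at $p$, and $u\not\equiv 0$, then $du(p)\neq 0$.

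Assume, for contradiction, that $du(p)=0$. I would isolate two ingredients. The first is a maximum-principle observation: if one knows $u\le 0$ on some full neighborhood $U$ of $p$, then every point of $M\cap U$ is an interior maximum point (with maximal value $0$) of the subharmonic function $u$, so the strong maximum principle forces $u\equiv 0$ on $U$, contradicting $u\not\equiv 0$. Hence the whole problem is reduced to establishing that $du(p)=0$ implies $u\le 0$ near $p$. The second ingredient is the infinitesimal degeneracy: differentiating the identity $u|_M\equiv 0$ along $M$ and using $du(p)=0$, one sees that the real Hessian of $u$ at $p$ vanishes on the tangent hyperplane $T_pM$; combined with plurisubharmonicity (the Hermitian Hessian is positive semidefinite) and the standard identity relating the complex and real Hessians, this shows that the Levi form of $u$ at $p$ vanishes identically on the complex tangent space $T_p^{1,0}M$.

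The crux is then to propagate this pointwise degeneracy into one-sided control of the sign of $u$. Writing $L_1,\dots,L_n$ for a basis of CR vector fields on $M$, the relations $L_j u=\bar L_k u=0$ on $M$ together with the bracket identity $[L_j,\bar L_k]\equiv c_{j\bar k}T$ modulo $T^{1,0}M\oplus T^{0,1}M$ (with $c_{j\bar k}$ the Levi form of $M$ and $T$ a transverse field) relate repeated tangential derivatives of $u$ to its transverse derivative along $M$. I would iterate this, feeding in the semidefiniteness of the Hermitian Hessian of the plurisubharmonic $u$, to spread the vanishing of $du$ from the single point $p$ along the CR orbit of $p$; the minimality of $M$ (equivalently, bracket generation, in the spirit of \cite{BR88}) is exactly what guarantees that this orbit fills a neighborhood of $p$ in $M$, so that $du\equiv 0$ on $M$ near $p$ and $u$ vanishes to second order along $M$. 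Plurisubharmonicity then pins down the sign, giving $u\le 0$ near $p$, at which point the maximum-principle observation above yields $u\equiv 0$, the desired contradiction, and hence $du(p)\neq 0$ and the transversality of $H$.

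I expect this propagation step to be the main obstacle. The difficulty is that $\rho'$ is only assumed smooth and plurisubharmonic, so one cannot argue that ``$u$ vanishes to infinite order along $M$'' forces $u\equiv 0$ by analyticity; the conclusion must instead be extracted from genuine positivity in a maximum principle. Making the bracket iteration interact correctly with the \emph{inequality} coming from plurisubharmonicity --- rather than with an exact identity, as one would have in the Levi-nondegenerate or real-analytic settings --- is the delicate point, and it is precisely where the minimality hypothesis must be used in full strength.
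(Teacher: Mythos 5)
Your reduction --- set $u:=\rho'\circ H$ and show that $u$ plurisubharmonic, $u|_M\equiv 0$, $u\not\equiv 0$, and $M$ minimal at $p$ force $du(p)\neq 0$ --- is sound, and correctly identifies how transversality enters. The gap is the propagation step, which you yourself flag as unresolved, and it is not merely technical: the mechanism you propose cannot work. Since $u$ vanishes identically on $M$, \emph{every} derivative of $u$ along vector fields tangent to $M$ vanishes on $M$; this includes not only $L_j,\bar L_k$ but also the characteristic field $T$, because $T$ is tangent to $M$ (it is transverse only to $T^cM$ inside $TM$). Consequently the identity $[L_j,\bar L_k]\equiv c_{j\bar k}T \pmod{T^{1,0}M\oplus T^{0,1}M}$, and any iteration of such brackets, only ever produces vector fields tangent to $M$, and applying them to $u$ on $M$ yields the trivial statement $0=0$. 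But transversality measures the derivative of $u$ \emph{normal} to $M$ (the tangential part of $du$ vanishes automatically), and no commutator algebra among fields tangent to $M$ can reach that normal derivative; this is why minimality-as-bracket-generation gives you nothing here. Your final step is also wrong in the direction it needs to be: plurisubharmonicity does not pin the sign to $u\le 0$. The function $u=(\im w)^2$ is plurisubharmonic and vanishes to second order on the hypersurface $\{\im w=0\}$, yet $u\ge 0$; for a nonnegative $u$ the points of $M$ are interior \emph{minima}, about which the strong maximum principle says nothing. Excluding this behavior when $M$ is minimal is precisely the content of the lemma, so your scheme becomes circular at the decisive point.

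The paper's proof uses minimality in a different, essentially one-dimensional way, via Tumanov's theorem: analytic discs attached to $M$ and passing through $p$ fill an open set, so one can choose a disc $\sigma$ of class $C^{1,\alpha}(\overline\Delta)$ with $\sigma(1)=p$, $\sigma(\partial\Delta)\subset M$, and $H\bigl(\sigma(\overline\Delta)\bigr)\not\subset M'$. The pullback $u=\rho'\circ H\circ\sigma$ is then subharmonic on $\Delta$, nonconstant, and vanishes on the \emph{whole} circle $\partial\Delta$ --- this is exactly the geometric input your approach lacks: a region whose entire boundary lies in $\{u=0\}$. The maximum principle gives $u<0$ inside with no propagation argument at all, and the classical Hopf boundary point lemma at $\zeta=1$ gives $\partial u/\partial x(1)>0$, which is a nonzero derivative transverse to $M'$, i.e.\ the transversality of $H$ at $p'$. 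If you want to repair your argument, the object to look for is a substitute for the attached disc, not a pointwise propagation of degeneracy along $M$.
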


Although this result is well known, the authors have been unable to find this precise statement in the literature. Thus, for the reader's convenience, we will provide a proof here. Let $\Delta$ be the unit disc in $\bC$ and $\overline \Delta$ its closure.
An analytic disc in $\bC^{n+1}$ is a continuous mapping $\sigma:\overline \Delta\to \bC^{n+1}$
which is holomorphic in $\Delta$. We shall say that $\sigma$ is attached to $M$ if
$\sigma(\partial \Delta) \subset M$. A fundamental theorem by Tumanov \cite{Tumanov88} (see also Chapter VIII in \cite{BER99a}) states that if $M$ is minimal at $p\in M$, then for any open neighborhood $U$ of $p$ the set of analytic discs of class $C^{1,\alpha}(\overline\Delta)$, for some $\alpha \in (0,1)$, contained in $U$,
attached to $M$, and passing through $p$ fill up an open subset of $U$ (containing at least ``one side" of $M$ near~$p$).

\begin{proof}[Proof of Lemma $\ref{complexhopflemma}$]
Since $H$ does not send a neighborhood of $p$ in $\bC^{n+1}$ into $M'$ and
$M$ is minimal, by the result of Tumanov mentioned above, there is an analytic
disc $\sigma: \overline \Delta \to \bC^{n+1}$ of class $C^{1,\alpha}(\overline\Delta)$, contained in an open neighborhood of $p$ on which $H$ is holomorphic and attached to $M$ with $\sigma(1) = p$, such that $H\left(\sigma (\overline{\Delta})\right)$ is not contained in $M'$. Now let $\rho'$ be a plurisubharmonic defining function for $M'$ near $p'$.
Then $u=\rho' \circ H \circ \sigma$ is a non-constant function of class $C^{1,\alpha}(\overline\Delta)$,
subharmonic in $\Delta$ and vanishing on $\partial \Delta$.
Let
$$X_p = \sigma_*\left(\frac{\partial}{\partial x}\biggl|_{\zeta = 1}\right), \quad \zeta = x +iy.$$
By the maximum principle, $u<0$ in $\Delta$ and by the classical Hopf boundary point lemma for subharmonic functions,
$$\frac{\partial u}{\partial x}(1) > 0$$ and consequently, $H_*X_p \not\in T_{p'} M'$.
Thus $H$ is transversal to $M'$ at $p'$.
\end{proof}

Recall that a germ of a holomorphic mapping $H\colon (\bC^{n+1},p)\to (\bC^{N+1},p')$ is said to be {\it finite} if the ideal $\mathcal{I}(H)$ generated by the components of $H$ in the ring $\mathcal O_p$ of germs of holomorphic functions at $p$ is of finite codimension. In this case, we shall refer to this codimension as the {\it multiplicity} of $H$ at $p$,
$$
\mult_p H:=\dim_{\bC} \mathcal O_p/\mathcal{I}(H).
$$
It is well known (see e.g.\ \cite{AGV85}) that if $H$ is finite at $p$, then for every $q$ close $p$ the number of preimages $m:=H^{-1}(H(q))$ is finite and $m\leq \mult_p H$. (In the equidimensional case $N=n$, the generic number of preimages equals $\mult_p H$, but in general we only have the inequality). If $H$ is finite at $p$ with
multiplicity one, then it follows readily that $H$ is a local embedding.

Now we can state and prove the following theorem about CR transversal mappings between
real hypersurfaces.
\begin{proposition}\label{multid}
Let $M$ and $M'$ be real-analytic hypersurfaces in
$\bC^{n+1}$ and $\bC^{N+1}$ respectively and let $p\in M$, $p'\in M'$. Suppose that
$H:(\bC^{n+1},p) \to (\bC^{N+1},p')$ is a germ of holomorphic mapping sending
$(M,p)$ into $M'$. If $M$ is essentially finite at $p$ and $H$ is transversal
to $M'$ at $p'=H(p)$, then $H$ is finite and
\begin{equation}\label{esstype:relation}
\mult_p H \leq \esstype_p M.
\end{equation}
Furthermore, if $M$ is finitely nondegenerate, then $H$ is a local embedding.
\end{proposition}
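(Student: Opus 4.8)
The plan is to work in normal coordinates at $p$ and $p'$, writing $M: w = Q(z,\bar z,\bar w)$ and $M': w' = Q'(z',\bar z',\bar w')$ with $H=(f,g)$, $f$ valued in $\bC^N$ and $g$ in $\bC$, and to exploit the basic reflection identity coming from $H(M)\subseteq M'$. After the standard complexification (see \cite{BER99a}) and parametrizing the complexification of $M$ by $(z,\chi,\tau)$ with $w=Q(z,\chi,\tau)$, one has
\[
g(z,Q(z,\chi,\tau)) = Q'\bigl(f(z,Q(z,\chi,\tau)),\,\bar f(\chi,\tau),\,\bar g(\chi,\tau)\bigr).
\]
My first step is to extract cheap but decisive information by specialization. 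Setting $\chi=0$ (so that $Q(z,0,\tau)=\tau$) and then $\tau=0$, and using the normal-form identity $Q'(z',0,\tau')\equiv\tau'$ together with $\bar f(0,0)=\bar g(0,0)=0$, forces $g(z,0)\equiv0$; this uses only $H(M)\subseteq M'$, not transversality. Hence $g$ is divisible by $w$, say $g=w\,u$ with $u(0)=g_w(0)$, and transversality of $H$ to $M'$ at $p'$ is exactly the condition $g_w(0)\neq0$ (since $g(z,0)\equiv0$ already annihilates all $z$-derivatives of $g$ at the origin). Thus $u$ is a unit, $(g)=(w)$ in $\mathcal O_p$, and so
\[
\mult_p H = \dim_{\bC}\mathcal O_p/(f_1,\dots,f_N,w) = \dim_{\bC}\bC\{z\}/(\hat f_1,\dots,\hat f_N),
\]
where $\hat f_j(z):=f_j(z,0)$. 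The problem is thereby reduced to understanding the ideal generated by the $\hat f_j$ in $\bC\{z\}$.

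The heart of the argument, and the step I expect to be the main obstacle, is the ideal containment $\mathcal I_M \subseteq (\hat f_1,\dots,\hat f_N)$, where $Q(z,\chi,0)=\sum_I q_I(z)\chi^I$ generates $\mathcal I_M$. To prove it I would set $\tau=0$ in the reflection identity, use $\bar g(\chi,0)\equiv0$ (the conjugate of $g(z,0)\equiv0$) and the expansion $Q'(z',\chi',0)=\sum_{|\beta|\ge1}q'_\beta(z')\chi'^\beta$ with $q'_\beta(0)=0$, to rewrite it as
\[
\Bigl(\sum_{|I|\ge1}q_I(z)\chi^I\Bigr)\,U(z,\chi) = \sum_{|\beta|\ge1} q'_\beta\bigl(f(z,Q(z,\chi,0))\bigr)\,\bar f(\chi,0)^\beta,
\]
with $U$ a unit. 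I would then compare coefficients of $\chi^I$ and induct on $|I|$. On the left the coefficient of $\chi^I$ is $q_I(z)U(z,0)$ modulo terms involving $q_J$ with $|J|<|I|$. On the right every summand carries a factor $\bar f(\chi,0)^\beta$ vanishing in $\chi$ to order $\ge|\beta|\ge1$, so only coefficients $[\chi^K]q'_\beta(f(z,Q(z,\chi,0)))$ with $|K|<|I|$ contribute. The crucial point is that, working modulo $(\hat f)$, one has $f(z,Q(z,\chi,0))\equiv Q(z,\chi,0)\cdot(\cdots)$, and since $q'_\beta(0)=0$ the term $q'_\beta(f(z,Q(z,\chi,0)))$ lies in the ideal generated by $Q(z,\chi,0)=\sum_{|J|\ge1}q_J\chi^J$; its $\chi$-coefficients are therefore combinations of $q_J$ with $|J|<|I|$, all in $(\hat f)$ by the inductive hypothesis. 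As $U(z,0)$ is a unit, this yields $q_I\in(\hat f)$ and closes the induction. The technical crux is the bookkeeping here: tracking orders of vanishing in $\chi$ so that the induction is well-founded, and carrying out the reduction modulo $(\hat f)$ of the nonlinear term $q'_\beta(f(\cdots))$.

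Granting the containment, $\bC\{z\}/(\hat f)$ is a quotient of $\bC\{z\}/\mathcal I_M$; since $M$ is essentially finite the latter is finite-dimensional, so $(\hat f)$ is $\mathfrak m$-primary, $\hat f^{-1}(0)=\{0\}$ as a germ, and $H$ is finite, with
\[
\mult_p H = \dim_{\bC}\bC\{z\}/(\hat f) \le \dim_{\bC}\bC\{z\}/\mathcal I_M = \esstype_p M,
\]
which is \eqref{esstype:relation}. Finally, for the embedding statement I would invoke that finite nondegeneracy of $M$ at $p$ is equivalent to essential finiteness with $\esstype_p M=1$ (recalled in the Preliminaries). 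Then $\mult_p H\le1$; since $H$ is finite with $H(p)=p'$ we also have $\mult_p H\ge1$, whence $\mult_p H=1$ and $H$ is a local embedding by the remark preceding the proposition.
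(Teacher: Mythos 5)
Your proposal is correct and takes essentially the same route as the paper: normal coordinates, the complexified reflection identity, the factorization $g = w\,u$ with $u$ a unit precisely by transversality, the key ideal containment $\mathcal{I}_M \subseteq (\hat f_1,\dots,\hat f_N)$, and the dimension count yielding finiteness and $\mult_p H \le \esstype_p M$. The only difference is technical: the paper keeps $Z$ and $\xi$ as independent variables with a multiplier $a(Z,\xi)$ in the mapping identity, so that setting $w=\tau=0$ places $F(z,0)$ (rather than your $f(z,Q(z,\chi,0))$) inside $Q'$, and the containment then follows from a single differentiation in $\chi$ at $0$ without needing your induction on $|I|$.
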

\begin{proof}
Suppose that $M$ and $M'$ are given in normal coordinates
$Z=(z,w)$ and $Z'=(z',w')$, vanishing at $p$ and $p'$ respectively, by complex defining functions $\rho$ and $\rho'$ of the forms:
\[
\rho(z,w,\bar z,\bar w) = w-Q(z,\bar z,\bar w), \quad \rho'(z',w',\bar z',\bar w') = w'-Q'(z',\bar z', \bar w').
\]
Since $H$ sends $M$ into $M'$, the following
holds for some real-analytic function $a(Z,\xi)$.
\begin{equation}\label{HsendsMintoMprime}
  G(Z) - Q'(F(Z), \bar H(\xi)) = a(Z,\xi) \,(w - Q(z,\xi)).
\end{equation}
Here, $H=(F,G)$ with $F=(F_1,\ldots, F_N)$. By setting $\xi = 0$, taking into account that $\bar H(0) = 0$, $Q(z,0,0) \equiv 0$ and
$Q'(z',0,0) \equiv 0$ we deduce that
\begin{equation}\label{Gw}
G(Z) = a(Z,0)\, w.
\end{equation}
Setting $w = \tau = 0$ and observing from \eqref{Gw} that $G(z,0) \equiv 0$ and $\bar G(\chi,0) \equiv 0$, we get
\begin{equation}\label{QQprime:rel}
 Q'(F(z,0), \bar F(\chi,0), 0) = a(z,0,\chi,0) \cdot Q(z,\chi,0).
\end{equation}
Since $H$ is transversal, we have $a(0) \ne 0$ (see e.g.\ \cite {BER07}). Therefore, $a(z,0,\chi,0)$ is non-vanishing for $(z,\chi)$ close to zero and hence
\begin{equation}\label{QQprime:rel2}
 a(z,0,\chi,0) ^{-1}\cdot Q'(F(z,0), \bar F(\chi,0), 0) = Q(z,\chi,0).
\end{equation}
We expand
\begin{equation}\label{expandQ}
Q(z,\chi,0) = \sum_I q_I(z)\,\chi^I.
\end{equation}
Let $\mathcal{I}_M$ and $\mathcal{I}(F)$ be the ideals in $\bC[[z]]$ generated by $\{q_I(z)\colon I\in \mathbb N^n\}$ and $\{F_j(z,0)\colon j=1,\dots N\}$, respectively. We claim that
\begin{equation}\label{inclusionideals}
\mathcal{I}_M \subset \mathcal{I}(F).
\end{equation}
Indeed, for each multi-index $I\in \mathbb{N}^n$, one has from \eqref{QQprime:rel2} that
\begin{equation}\label{diff}
q_I(z) = \frac{1}{I!}\,\frac{\partial^{I}}{\partial \chi^{I}}\biggl(a(z,0,\chi,0) ^{-1}\cdot Q'(F(z,0), \bar F(\chi,0), 0) \biggr) \biggr|_{\chi = 0}.
\end{equation}
If we expand
\begin{equation}\label{expandQ}
Q'(z',\chi',0) = \sum_J q'_J(z)\,(\chi')^J,
\end{equation}
then it is clear from \eqref{diff} that $q_I(z)$ belongs to the ideal generated by the $q'_J(F(z,0))$, $J\in\mathbb N^N$, which in turn belongs to the ideal $\mathcal {I}(F)$ (since the ideal $\mathcal{I}_{M'}$, generated by the $q'_J(z')$, of course is contained in the maximal ideal).  Therefore, we obtain \eqref{inclusionideals}.
Furthermore, since $M$ is essentially finite, $\mathcal{I}_M$ is of finite codimension in $\bC[[z]]$ and so is $\mathcal{I}(F)$, by \eqref{inclusionideals}, and hence $F(z,0)$ is finite. Moreover,
\begin{equation}\label{e:16}
\mult_0(F(\cdot,0)) = \dim_{\bC}\bC[[z]]/\mathcal{I}(F) \leq \dim_{\bC}\bC[[z]]/\mathcal{I}_M = \esstype_0(M).
\end{equation}
On the other hand, it follows from \eqref{Gw} and the invertibility of $a(Z,0)$ that $w\in \mathcal{I}(H)$ and, hence, $H$ is also finite and
\begin{equation}\label{e:17}
\mult_0(H) = \mult_0(F(\cdot,0)).
\end{equation}
From \eqref{e:16} and \eqref{e:17}, we obtain \eqref{esstype:relation}.

Furthermore, if $M$ is finitely nondegenerate at $p$, or equivalently (as mentioned above) $\esstype_p M=1$, then it follows from \eqref{esstype:relation} that $\mult_p H =1$ and thus $H$ is a local embedding.
\end{proof}

The following lemma is elementary.
\begin{lemma}\label{leviidentity}
Let $M$ and $M'$ be real hypersurfaces in $\bC^{n+1}$ and $\bC^{N+1}$ through $p$ and $p'$
respectively. Let $H:(\bC^n,p) \to (\bC^N,p')$ be a germ of holomorphic mapping
sending $M$ into $M'$. Suppose that $M'$ is strictly pseudoconvex at $p'$ and $H$
is local embedding. Then $M$ is strictly pseudoconvex at~$p$.
\end{lemma}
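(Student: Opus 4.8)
The plan is to produce a strictly plurisubharmonic defining function for $M$ by pulling back one from $M'$ along $H$, and then to read off strict pseudoconvexity of $M$ by restricting the resulting complex Hessian to the complex tangent space. Since $M'$ is strictly pseudoconvex at $p'$, there is a local defining function $\rho'$ for $M'$ near $p'$ whose complex Hessian $(\partial^2\rho'/\partial Z'_a\partial\bar Z'_b)$ is positive definite at $p'$, i.e.\ a strictly plurisubharmonic defining function. Because $H$ sends $M$ into $M'$, the real-valued function $\rho:=\rho'\circ H$ vanishes on $M$. This is just the functoriality of Levi forms under CR maps, but I would derive what I need directly.

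First I would compute the complex Hessian of $\rho$ at $p$. Since $H=(H_1,\dots,H_{N+1})$ is holomorphic, the chain rule collapses to $\partial\rho/\partial Z_j=\sum_a(\partial\rho'/\partial Z'_a)\circ H\cdot\partial H_a/\partial Z_j$, and differentiating once more in $\bar Z_k$ gives $\partial^2\rho/\partial Z_j\partial\bar Z_k=\sum_{a,b}(\partial^2\rho'/\partial Z'_a\partial\bar Z'_b)\circ H\cdot(\partial H_a/\partial Z_j)\overline{(\partial H_b/\partial Z_k)}$. Thus for any $X\in\bC^{n+1}$ the Hermitian form of $\rho$ evaluated on $X$ equals the Hermitian form of $\rho'$ evaluated on $dH(p)X$. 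Because $H$ is a local embedding, $dH(p)$ is injective, so $X\neq0$ forces $dH(p)X\neq0$; combined with the positive definiteness of the Hessian of $\rho'$ at $p'$, this shows the complex Hessian of $\rho$ at $p$ is positive definite, i.e.\ $\rho$ is strictly plurisubharmonic at $p$.

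Next I would pass from strict plurisubharmonicity to strict pseudoconvexity of $M$. Fix any defining function $r$ for $M$ near $p$. Since $\rho$ is smooth and vanishes on $M=\{r=0\}$, the Hadamard/division lemma gives $\rho=h\,r$ for a smooth real-valued factor $h$. Writing $T^c_pM$ for the complex tangent space, i.e.\ the set of $X\in\bC^{n+1}$ with $\sum_j(\partial r/\partial Z_j)(p)X_j=0$, I would restrict the Hessian identity to $X\in T^c_pM$: on $M$ one has $r=0$, and the defining constraint kills the cross terms coming from $(\partial h/\partial Z_j)(\partial r/\partial\bar Z_k)$ and their conjugates, leaving $\sum_{j,k}(\partial^2\rho/\partial Z_j\partial\bar Z_k)(p)X_j\bar X_k=h(p)\sum_{j,k}(\partial^2 r/\partial Z_j\partial\bar Z_k)(p)X_j\bar X_k$. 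The left-hand side is positive for every nonzero $X\in T^c_pM$ by the previous step, so $h(p)\neq0$ and the Levi form of $r$ on $T^c_pM$ is definite, of the sign of $h(p)$. A definite Levi form is exactly the statement that $M$ is strictly pseudoconvex at $p$ (after possibly replacing $r$ by $-r$), which completes the argument.

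The computation itself is routine; the only point that needs care is the claim that $\rho=\rho'\circ H$ is a genuine defining function rather than vanishing to higher order, equivalently that $h(p)\neq0$, i.e.\ that $H$ is transversal to $M'$. I would stress that this comes for free: transversality is not assumed but forced, since if $h(p)$ were $0$ then the restricted Hessian would vanish on $T^c_pM$, contradicting the strict positivity established above (here $n\geq1$, so $T^c_pM\neq\{0\}$). Equivalently, $H$ cannot collapse a neighborhood of $p$ into $M'$, because an embedded complex manifold of positive dimension cannot lie in a strictly pseudoconvex hypersurface; hence $\rho$ is not identically zero and its definiteness at $p$ does the rest.
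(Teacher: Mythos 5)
Your proof is correct. Its core step --- pulling back a strictly plurisubharmonic defining function $\rho'$ along $H$ and observing that, since $H$ is holomorphic, the complex Hessian of $\rho=\rho'\circ H$ at $p$ evaluated on $X$ equals the Hessian of $\rho'$ at $p'$ evaluated on $dH(p)X$, hence is positive definite because $dH(p)$ is injective --- is exactly the mechanism the paper uses too. Where you genuinely diverge is in establishing that $\rho$ is an honest defining function for $M$, i.e.\ the transversality of $H$ to $M'$. The paper obtains transversality by invoking its Hopf-lemma-type result (Lemma~\ref{complexhopflemma}) and then cites \cite{BER07} for the fact that the pullback of a defining function by a transversal map is again a defining function; note that Lemma~\ref{complexhopflemma} formally assumes $M$ is minimal at $p$, which is not among the hypotheses of Lemma~\ref{leviidentity} (minimality is in fact automatic here, since a complex hypersurface contained in $M$ would be embedded by $H$ as a positive-dimensional complex manifold inside the strictly pseudoconvex $M'$, but the paper does not address this). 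Your route avoids the issue entirely: writing $\rho=hr$ with $r$ a defining function for $M$, the restriction of the Hessian identity to $T^c_pM$ shows that $h(p)=0$ would force the positive definite Hessian of $\rho$ to vanish on $T^c_pM\neq\{0\}$, so $h(p)\neq 0$ (that is, transversality) comes for free, and the Levi form of $r$ on $T^c_pM$ is then definite. The payoff of your argument is a self-contained proof requiring no minimality hypothesis and no analytic-disc machinery; the payoff of the paper's is brevity, at the cost of leaning on a lemma whose stated hypotheses do not literally match the situation.
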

\begin{proof}
Let $\rho'$ be a strictly plurisubharmonic local defining function for $M'$ near $p$.
By Lemma~\ref{complexhopflemma}, $H$ is transversal to $M'$ at $p'$ and hence
$\rho=\rho'\circ H$ is a local defining function for $M$ near~$p$ (see \cite{BER07}). Since
$H$ is local embedding, $\rho$ is
strictly plurisubharmonic near~$p$ and thus $M$ is strictly pseudoconvex at $p$.
\end{proof}

In order to prove Theorem~\ref{cor}, we will also need the following theorem
by Shafikov and Verma.

\begin{theorem}[\cite{ShafikovVerma07}]\label{shafikov:verma}
Let $M$ be a smooth real analytic minimal hypersurface in $\C^{n+1}$, $M'$
a compact, strictly pseudoconvex, real algebraic hypersurface in
$\C^{N+1}$, $1<n\le N$. Suppose that $H$ is a germ of a holomorphic map at a
point $p\in M$ and $H(M)\subset M'$. Then $H$ extends as a holomorphic
map along any CR-curve on $M$ such that the extension sends $M$ into $M'$.
\end{theorem}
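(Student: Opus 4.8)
The plan is to prove the extension statement by the Segre variety / reflection method, turning ``continuation along a CR curve'' into a monodromy problem for a family of \emph{algebraic} varieties attached to the target. First I would fix a real polynomial defining function $\rho'(w',\bar w')$ for $M'$ and a real-analytic defining function $\rho(Z,\bar Z)$ for $M$, and complexify both. The inclusion $H(M)\subset M'$ says $\rho'(H(Z),\overline{H(Z)})\equiv 0$ on $M$; replacing $\bar Z$ by an independent variable $\zeta$ and using the reality of $\rho$ yields the reflection identity, which asserts that $H$ carries the Segre variety $Q_\zeta$ of $M$ into the Segre variety $Q'_{H(\zeta)}$ of $M'$. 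The decisive structural input is that, because $M'$ is real-algebraic, the $Q'_{a'}$ are \emph{algebraic} hypersurfaces depending algebraically on the parameter $a'$: they are globally defined in $\C^{N+1}$ and their degenerate locus is a proper algebraic subset. This algebraicity is the reservoir of ``room'' that makes global continuation possible, in contrast to the merely local Segre varieties of the analytic source $M$.

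Next I would establish the local extension step near $p$. By minimality of $M$ and Tumanov's theorem (quoted before the proof of Lemma~\ref{complexhopflemma}), either $H$ already maps a neighborhood of $p$ into $M'$, in which case the conclusion is immediate from the identity theorem, or the Hopf-lemma argument of Lemma~\ref{complexhopflemma} applies and $H$ is transversal to $M'$. In the transversal case, minimality gives holomorphic extension of the CR components of $H$ to a one-sided wedge with edge $M$, and transversality together with the strict pseudoconvexity of $M'$ upgrades this to holomorphic extension across $M$ to a full neighborhood of $p$ in $\C^{n+1}$. The reflection identity then holds as an identity of holomorphic germs, and $H$ maps the (local) source Segre varieties into the (global, algebraic) target Segre varieties. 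Where $H$ is moreover a local embedding, Lemma~\ref{leviidentity} shows $M$ is strictly pseudoconvex, hence finitely nondegenerate; there one can differentiate the reflection identity along $Q_\zeta$ and invert the resulting nondegenerate system to express the components of $H$ as algebraic functions of finitely many jets of the target Segre data, an explicit representation that continues.

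Then comes the global propagation. Given a CR curve $\gamma:[0,1]\to M$ with $\gamma(0)=p$, let $T$ be the supremum of all $t$ for which $H$ continues holomorphically along $\gamma|_{[0,t]}$ as a germ mapping $M$ into $M'$. This set is open and nonempty, so the content is to show $T$ is attained and, if $T<1$, that $\gamma(T)$ can be crossed. Here the reflection correspondence does the work: the continued map is governed by the family of target Segre varieties $Q'_{H(\gamma(t))}$, and continuing $H$ past $\gamma(T)$ is an \emph{algebraic} continuation problem for this family. Since $H(\gamma(t))$ lies on the compact, strictly pseudoconvex $M'$, its image stays in a fixed compact set on which the target Segre map is uniformly immersive and finite-to-one with a uniformly bounded number of branches. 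Boundedness of $H$ together with these uniform bounds yields, via normal families and a removable-singularity argument for the finitely many branches, a holomorphic germ of $H$ at $\gamma(T)$; propagating the reflection identity by real-analyticity of $M$ shows this germ still sends $M$ into $M'$. Connectedness of $[0,1]$ then gives continuation along all of $\gamma$.

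The main obstacle is exactly this closedness/crossing step at $t=T$. A priori the algebraic continuation of the Segre correspondence could branch or acquire poles, and the germ of $H$ could fail to remain single-valued or bounded as $\gamma(T)$ is approached---this is especially delicate because the source $M$ is only assumed minimal, not essentially finite, so no global explicit formula for $H$ is available in general. The two hypotheses on $M'$ are what defeat this: compactness confines $H(\gamma)$ to a fixed compact set and bounds the degree, hence the number of branches, of the target Segre family uniformly, while strict pseudoconvexity keeps the target Segre map immersive along $M'$, so the system one inverts to recover $H$ does not degenerate as $t\to T$. Turning these into precise uniform estimates---ensuring the finitely many branches of the continued map neither collide nor escape to infinity---is the technical heart of the argument and the step I expect to require the most care.
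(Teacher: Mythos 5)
First, a point of comparison you could not have known: the paper contains no proof of this statement at all --- it is imported verbatim from Shafikov and Verma \cite{ShafikovVerma07} and used as a black box in the proof of Theorem~\ref{cor}. So the relevant benchmark is the argument in the cited source, and against that benchmark your outline has the right skeleton: complexify the defining equations to get the reflection identity, exploit the fact that the Segre varieties $Q'_{a'}$ of the \emph{algebraic} target are globally defined algebraic hypersurfaces, continue along $\gamma$ via an open set of parameters $t$, and let compactness and strict pseudoconvexity of $M'$ control the continuation. This is indeed the Segre-variety machinery that Shafikov and Verma deploy.

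But there is a genuine gap, and you name it yourself: the crossing step at $t=T$ is the entire content of the theorem, and your proposal replaces it with a wish list (``normal families and a removable-singularity argument for the finitely many branches'') rather than an argument. Nothing in your sketch shows that the cluster set of $H$ at $\gamma(T)$ is small or that the branches organize into an analytic object; a direct normal-families limit of $H$ is not available. The mechanism that actually works, and the one used in \cite{ShafikovVerma07} (building on earlier work of Shafikov on correspondences), has two separate stages that your sketch conflates. First one extends $H$ past $\gamma(T)$ as a \emph{proper holomorphic correspondence}: because $M'$ is algebraic, the reflection system cutting out the graph of the extension is algebraic in the target variables, so the closure of the graph over a neighborhood of $\gamma(T)$ is an analytic set with proper, finite projection, and this correspondence still satisfies the invariance property sending $M$ into $M'$. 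Only then does strict pseudoconvexity enter, via injectivity of the Segre map $a'\mapsto Q'_{a'}$ in a neighborhood of $M'$, to force the correspondence to split into single-valued holomorphic branches, one of which continues the germ; ``uniformly immersive and finite-to-one'' is a statement about the Segre map, not about $H$, and by itself gives no single-valuedness. Two further symptoms of the gap: your ``local extension step'' is vacuous as stated, since $H$ is by hypothesis a germ of a holomorphic map at $p$ and hence already holomorphic in a full neighborhood of $p$ (minimality is needed not there but at the intermediate points of $\gamma$, to propagate the invariance identities); and your proof never invokes the hypothesis $1<n$, which the cited proof does use --- the case $n=1$ is genuinely different for continuation of correspondences --- so the sketch is silently glossing over at least one place where a stated hypothesis is essential.
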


\begin{proof}[Proof of Theorem~\ref{cor}] Let $M'$ be a compact, strictly pseudoconvex, real algebraic hypersurface in $\bC^{N+1}$. Let $q\in M$, $q'\in M'$ and
suppose that $H:(\bC^{n+1}, q) \to (\bC^{N+1},q')$  is a germ of a nonconstant holomorphic mapping
sending $(M,q)$ into $(M',q')$. By Theorem~\ref{shafikov:verma}, $H$ extends as a holomorphic map (necessarily nonconstant)
along every CR-curve, in particular one connecting $q$ and $p$ (which exists by the assumption that $M$ is minimal). The extension is then a nonconstant holomorphic mapping near $p$ sending $M$ into $M'$. Since $M'$ is strictly pseudoconvex, $H$ does not send an open neighborhood of $p$ in $\bC^{n+1}$ into $M'$ and, therefore by Lemma~\ref{complexhopflemma}, $H$ is transversal to $M'$ at $p'$.
Consequently, by Proposition \ref{multid}, $H$ is a local embedding at $p$, since $M$ is finitely ($k$-)nondegenerate at $p$.
It then follows from Lemma~\ref{leviidentity} that $M$ is strictly pseudoconvex at $p$. This is a contradiction (since $k\geq 2$), which completes the proof of Theorem \ref{cor}.
\end{proof}

\section{Proof of Theorems $\ref{Main0}$ and $\ref{Main1}$; A Construction}\label{Example}

In this section we shall construct a compact, real algebraic hypersurface $M$ in $\bC^{n+1}$ that is strictly pseudoconvex except at $0\in M$ and that is $3$-nondegenerate at $0$. This will prove Theorem \ref{Main1} in view of Theorem \ref{cor}. (Finite nondegeneracy implies, in particular, minimality; see \cite{BER99a}.) At the end of this section, we shall also prove Theorem \ref{Main0}.

Consider the homogeneous, real-valued polynomials $P_R$ in $\bC^{n+1}$ (where we shall use the notation $Z=(z,w)\in \bC^n\times\bC$ for the variables) given by
\begin{equation}\label{PR}
P_R(z,w,\bar z,\bar w):=R\left(\sum_{k=1}^n|z_k|^2+|w|^2\right)^2+2\sum_{k=1}^n\re(z_k\bar z_k^3),\quad R>0.
\end{equation}
It is easy to see that for sufficiently large $R>0$, the polynomials $P_R$ are strictly positive and strictly plurisubharmonic away from $0$. In what follows, we shall assume that $R$ is so large that these facts hold and will drop the subscript $R$ on $P_R$, i.e.\ $P=P_R$. Next, consider the polynomial
\begin{equation}\label{rho}
\rho(z,w,\bar z,\bar w):=-\im w+P(z,w,\bar z,\bar w)
\end{equation}
and define $M\subset \bC^{n+1}$ to be real algebraic variety given by the zero locus of $\rho$. It is straightforward to verify that $M$ is a compact ($\rho\to \infty$ as $||z||^2+|w|^2\to \infty$), real hypersurface (no singularities; $\rho_w=0$ implies that $w$ has strictly negative imaginary part and there are no such points on $M$). Since $\partial\bar \partial \rho=\partial\bar\partial P$, we conclude that $\rho$ is strictly plurisubharmonic except at $0\in M$ and, hence, $M$ is strictly pseudoconvex except at $0$ (where clearly $M$ is weakly pseudoconvex). To show that $M$ satisfies the conclusion of Theorem \ref{Main1}, it suffices to show that $M$ is $k$-nondegenerate at $0$ with $k\geq 2$ (and in this case $k$ equals $3$). A basis for the CR vector fields on $M$ near $0$ is given by
\begin{equation}\label{CRvector}
L_j:=\frac{\partial}{\partial \bar z_j}-2iP_{\bar z_j}\frac{\partial}{\partial \bar w},\quad j=1,\ldots n.
\end{equation}
We note that for any multi-index $I\in \mathbb Z_+^n$ and any smooth function $u$, we have $$(L^Iu)(0,0)=\frac{\partial^{|I|} u}{\partial \bar z^I}(0,0).$$
We also have
\begin{multline}
\rho_Z=\bigg(2R\bar z_1\big(\sum_{k=1}^n|z_k|^2+|w|^2\big)+\bar z_1^3+3z_1^2\bar z_1,\ldots, \\2R\bar z_n\big(\sum_{k=1}^n|z_k|^2+|w|^2\big)+\bar z_n^3+3z_n^2\bar z_1, -\frac{1}{2i}+2R\bar w\big(\sum_{k=1}^n|z_k|^2+|w|^2\big)\bigg)
\end{multline}
Thus, we conclude that $\rho_Z(0,0)=(0,\ldots,0,-1/2i)$ and $(L^I\rho_Z)(0,0)=(0,\ldots,0)$ unless $L^I=L_j^3$ in which case we have $(L_j^3\rho_Z)(0,0)=(0,\ldots,0,6,0,\ldots,0)$ where the 6 appears in the $j$:th component. Therefore, $M$ is $3$-nondegenerate at $0$. As mentioned above, $M$ then satisfies the conclusion of Theorem \ref{Main1}, and this completes the proof of this theorem.

To prove Theorem \ref{Main0}, we note that $M$ meets the complex hyperplane $W:=\{(z,w)\in \bC^{n+1}\colon w=0\}$ only at the point $(0,0)$. Consider the linear fractional transformation
$$
\hat z_j=\frac{z_j}{w},\quad \hat w=\frac{1}{w},\quad j=1,\ldots n.
$$
This transformation extends as an automorphism $\Phi$ of the projective space $\bP^{n+1}$ sending the complex hyperplane $W$ to the hyperplane at infinity. If we define $\hat M:=\Phi(M)\cap \bC^{n+1}$, then $\hat M$ is a closed, real algebraic hypersurface in $\bC^{n+1}$ that is biholomorphic to $M\setminus\{(0,0)\}$ and is therefore strictly pseudoconvex at every point. Clearly, any local holomorphic mapping of $\hat M$ into a compact, strictly pseudoconvex, real algebraic hypersurface in $\bC^{N+1}$ induces a local holomorphic mapping of $M$ into the same hypersurface and, hence, must be constant by Theorem \ref{Main1}. This completes the proof of Theorem \ref{Main0}.

\section{Concluding Remarks}

While, as mentioned in the introduction, it is obvious that there cannot be a compact, real algebraic hypersurface $M\subset \bC^{n+1}$ that is strictly pseudoconvex at every point and satisfies the conclusion of Theorem \ref{Main1}, there could exist  such a hypersurface that cannot be locally (nontrivially) mapped into a sphere in $\bC^{N+1}$ for any $N$. Thus, to conclude this paper we offer the following refined, yet unresolved version of the original question settled in this paper:

\begin{question}
Does there exist a {\it compact}, real algebraic hypersurface $M\subset \bC^{n+1}$, strictly pseudoconvex at {\it every} point such that if $H$ is a local mapping sending a piece of $M$ into a sphere in $\bC^{N+1}$, then $H$ must be constant?
\end{question}

\def\cprime{$'$}


\begin{thebibliography}{29}

\bibitem{AGV85}
{\sc Arnol{\cprime}d, V.~I., Guse{\u\i}n-Zade, S.~M., and Varchenko, A.~N.}
\newblock {\em Singularities of differentiable maps. {V}ol. {I}}, vol.~82 of
  {\em Monographs in Mathematics}.
\newblock Birkh\"auser Boston Inc., Boston, MA, 1985.
\newblock The classification of critical points, caustics and wave fronts,
  Translated from the Russian by Ian Porteous and Mark Reynolds.

\bibitem{BEH08}
{\sc Baouendi, M.~S., Ebenfelt, P., and Huang, X.}
\newblock Super-rigidity for {CR} embeddings of real hypersurfaces into
  hyperquadrics.
\newblock {\em Adv. Math. 219}, 5 (2008), 1427--1445.

\bibitem{BEH09}
{\sc Baouendi, M.~S., Ebenfelt, P., and Huang, X.}
\newblock Holomorphic mappings between hyperquadrics with small signature
  difference.
\newblock {\em Amer. J. Math. 133}, 6 (2011), 1633--1661.

\bibitem{BER99a}
{\sc Baouendi, M.~S., Ebenfelt, P., and Rothschild, L.~P.}
\newblock {\em Real submanifolds in complex space and their mappings}, vol.~47
  of {\em Princeton Mathematical Series}.
\newblock Princeton University Press, Princeton, NJ, 1999.

\bibitem{BER07}
{\sc Baouendi, M.~S., Ebenfelt, P., and Rothschild, L.~P.}
\newblock Transversality of holomorphic mappings between real hypersurfaces in
  different dimensions.
\newblock {\em Comm. Anal. Geom. 15\/} (2007), 589--611.

\bibitem{BJT85}
{\sc Baouendi, M.~S., Jacobowitz, H., and Treves, F.}
\newblock On the analyticity of {CR} mappings.
\newblock {\em Ann. Math. 122\/} (1985), 365----400.

\bibitem{BR88}
{\sc Baouendi, M.~S., and Rothschild, L.~P.}
\newblock Germs of {CR} maps between real analytic hypersurfaces.
\newblock {\em Invent. Math. 93}, 3 (1988), 481--500.

\bibitem{JPD11}
{\sc D'Angelo, J.~P.}
\newblock Hermitian analogues of {H}ilbert's 17-th problem.
\newblock {\em Adv. Math. 226}, 5 (2011), 4607--4637.

\bibitem{JPDLebl11}
{\sc D'Angelo, J.~P., and Lebl, J.}
\newblock Hermitian symmetric polynomials and {CR} complexity.
\newblock {\em J. Geom. Anal. 21}, 3 (2011), 599--619.

\bibitem{E96}
{\sc Ebenfelt, P.}
\newblock On the unique continuation problem for {CR} mappings into nonminimal
  hypersurfaces.
\newblock {\em J. Geom. Anal. 6}, 3 (1996), 385--405 (1997).

\bibitem{EHZ04}
{\sc Ebenfelt, P., Huang, X., and Zaitsev, D.}
\newblock Rigidity of {CR}-immersions into spheres.
\newblock {\em Comm. Anal. Geom. 12}, 3 (2004), 631--670.

\bibitem{EHZ05}
{\sc Ebenfelt, P., Huang, X., and Zaitsev, D.}
\newblock The equivalence problem and rigidity for hypersurfaces embedded into
  hyperquadrics.
\newblock {\em Amer. J. Math. 127}, 1 (2005), 169--191.

\bibitem{ER06}
{\sc Ebenfelt, P., and Rothschild, L.~P.}
\newblock Transversality of {CR} mappings.
\newblock {\em Amer. J. Math. 128\/} (2006), 1313--1343.

\bibitem{ESh10}
{\sc Ebenfelt, P., and Shroff, R.}
\newblock Partial rigidity of {CR} embeddings of real hypersurfaces into
  hyperquadrics with small signature difference.
\newblock {\em Comm. Anal. Geom.\/} (to appear).

\bibitem{Faran82}
{\sc Faran, J.~J.}
\newblock Maps from the two-ball to the three-ball.
\newblock {\em Invent. Math. 68}, 3 (1982), 441--475.

\bibitem{Faran86}
{\sc Faran, J.~J.}
\newblock The linearity of proper holomorphic maps between balls in the low
  codimension case.
\newblock {\em J. Differential Geom. 24}, 1 (1986), 15--17.

\bibitem{Faran88}
{\sc Faran, V, J.~J.}
\newblock The nonimbeddability of real hypersurfaces in spheres.
\newblock {\em Proc. Amer. Math. Soc. 103}, 3 (1988), 902--904.

\bibitem{Forstneric86}
{\sc Forstneri{\v{c}}, F.}
\newblock Embedding strictly pseudoconvex domains into balls.
\newblock {\em Trans. Amer. Math. Soc. 295}, 1 (1986), 347--368.

\bibitem{Forstneric04}
{\sc Forstneri{\v{c}}, F.}
\newblock Most real analytic {C}auchy-{R}iemann manifolds are nonalgebraizable.
\newblock {\em Manuscripta Math. 115}, 4 (2004), 489--494.

\bibitem{HuangJi01}
{\sc Huang, X., and Ji, S.}
\newblock Mapping $\mathbb{B}^n$ into $\mathbb{B}^{2n-1}$.
\newblock {\em Inventiones Mathematicae 145\/} (2001), 219--250.
\newblock 10.1007/s002220100140.

\bibitem{HZ09}
{\sc Huang, X., and Zhang, Y.}
\newblock Monotonicity for the {Chern-Moser-Weyl} curvature tensor and {CR}
  embeddings.
\newblock {\em Science in China, Ser. A 52}, 12 (2009), 2617--2627.

\bibitem{KimOh09}
{\sc Kim, S.-Y., and Oh, J.-W.}
\newblock Local embeddability of {CR} manifolds into spheres.
\newblock {\em Math. Ann. 344}, 1 (2009), 185--211.

\bibitem{Oh07}
{\sc Oh, J.-W.}
\newblock Local {C}auchy-{R}iemann embeddability of real hyperboloids into
  spheres.
\newblock {\em Proc. Amer. Math. Soc. 135}, 2 (2007), 397--403 (electronic).

\bibitem{ShafikovVerma07}
{\sc Shafikov, R., and Verma, K.}
\newblock Extension of holomorphic maps between real hypersurfaces of different
  dimension.
\newblock {\em Ann. Inst. Fourier (Grenoble) 57}, 6 (2007), 2063--2080.

\bibitem{Tumanov88}
{\sc Tumanov, A.~E.}
\newblock Extension of {CR}-functions into a wedge from a manifold of finite type.
  ({Russian}).
\newblock {\em Mat. Sb. (N.S) 136 (178)\/} (1988), 128--130.

\bibitem{Webster78b}
{\sc Webster, S.~M.}
\newblock Some birational invariants for algebraic real hypersurfaces.
\newblock {\em Duke Math. J. 45}, 1 (1978), 39--46.

\bibitem{Webster79}
{\sc Webster, S.~M.}
\newblock The rigidity of {C}-{R} hypersurfaces in a sphere.
\newblock {\em Indiana Univ. Math. J. 28}, 3 (1979), 405--416.

\bibitem{Z99}
{\sc Zaitsev, D.}
\newblock Algebraicity of local holomorphisms between real-algebraic
  submanifolds of complex spaces.
\newblock {\em Acta Math. 189}, 3 (1999), 273--305.

\bibitem{Zaitsev08}
{\sc Zaitsev, D.}
\newblock Obstructions to embeddability into hyperquadrics and explicit
  examples.
\newblock {\em Math. Ann. 342}, 3 (2008), 695--726.

\end{thebibliography}
\end{document}